\documentclass[12pt]{article}

\usepackage{mathtools}
\usepackage{amssymb}
\usepackage{amsthm}
\usepackage{mytheorems}
\usepackage{mymacros}

\theoremstyle{plain}
\newtheorem*{TheoremA}{Theorem A}
\newtheorem*{TheoremB}{Theorem B}
\newtheorem*{TheoremC}{Theorem C}
\newtheorem*{CorollaryD}{Corollary D}

\begin{document}

\title{\bf \boldmath Notes on $K^{\infty}$}

\author{Paul Flavell}

\maketitle

\section{Introduction} \label{intro}

Glauberman's characteristic $p$-functor $K^{\infty}$ \cite{G1,G2}
has remarkable properties.
It controls $p$-transfer in all groups for all primes $p \geq 5$.
It controls $p$-fusion in all $\qd{p}$-free groups for all odd primes.

These notes grew out of the author's attempt to understand and simplify
the definition of $K^{\infty}$.
They are a re-working of some of the ideas in Glauberman's article \cite{G1}.
No new ideas are introduced.
In what follows, all groups are finite.

The definition of $K^{\infty}$ in \cite{G1} is complex.
It involves the construction of two chains of subgroups by means of
an involved group theoretic condition.
We have abstracted and simplified the first part to a general construction
which has little to do with groups.
A similar construction has been employed by Puig \cite[Appendix B]{BG}.
The group theoretic condition has been simplified to a
\emph{no quadratic action} condition reminiscent of that used to define
the Glauberman-Solomon subgroup \cite{F2,GS}.

In \S\ref{dk} we define a characteristic $p$-functor $K^{\infty}$.
We do not know if it yields the same map as Glauberman's definition.
However, that does not matter since we prove that the $K^{\infty}$ we
define enjoys the same properties as Glauberman's.

\begin{TheoremA}
    The map $K^{\infty}$ is a positive characteristic $p$-functor for every prime $p$.
    Moreover the following holds for every $p$-group $S$.
    \begin{itemize}
        \item[(a)]  If $\kinfty{S} \leq T \leq S$ then $\kinfty{S} = \kinfty{T}$.

        \item[(b)]  $\kinfty{S}$ contains every abelian normal subgroup of $S$.

        \item[(c)]  $\cc{S}{\kinfty{S}} \leq \kinfty{S}$.
                    In particular $\zenter{S} \leq \kinfty{S}$.
    \end{itemize}
\end{TheoremA}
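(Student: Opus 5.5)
Theorem A is stated in the introduction, before the definition of $K^{\infty}$ in \S\ref{dk}, so the proof can only follow that definition. I expect it to run as follows. The abstract construction builds an increasing chain of characteristic subgroups
\[
K^{1}(S) \leq K^{2}(S) \leq \cdots ,
\]
where each term is obtained from the previous one by adjoining the subgroups satisfying a \emph{no quadratic action} condition relative to it. The chain stabilizes, and $\kinfty{S}$ is its terminal member. The general construction should come with a small toolkit, which I will use:
\begin{itemize}
\item[(i)] a monotonicity/restriction property: if $X \leq T \leq S$ and $X$ is admissible for $S$, then $X$ is admissible for $T$;
\item[(ii)] a fixed-point characterization of the terminal object;
\item[(iii)] a uniqueness statement for that terminal object.
\end{itemize}

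\textbf{Step 1: characteristic $p$-functor and positivity.} $\kinfty{S}$ is defined from $S$ by a canonical, isomorphism-invariant procedure. Hence $\phi(\kinfty{S}) = \kinfty{\phi(S)}$ for every isomorphism $\phi$, which gives both the functor property and characteristicness. For positivity I will show that some nontrivial subgroup of $S$ always enters the chain. The natural candidate is any abelian normal subgroup $A \neq 1$ (for instance $\Omega_1(\zenter{S})$). Such an $A$ acts trivially, hence a fortiori not quadratically, on the relevant sections, so it survives the no quadratic action test. This is really a special case of (b).

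\textbf{Step 2: part (b).} Let $A$ be abelian and normal in $S$. Suppose, for a contradiction, that $A \not\leq \kinfty{S}$. I will use the stability of the chain: $\kinfty{S}$ is closed under adjoining admissible subgroups. I then need to check that $A$ (or $A\,\kinfty{S}$) is admissible. Since $A$ is abelian, $[V,A,A]$ lies in commutators of elements of $A$, which vanish. So $A$ has no genuinely quadratic action that the condition excludes, exactly as in the Glauberman--Solomon setting. Hence $A$ would have been adjoined, a contradiction.

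\textbf{Step 3: parts (a) and (c).} For (a), take $\kinfty{S} \leq T \leq S$. By (i), every term of the chain for $S$ is produced inside $T$ by the same process. So by induction on the chain, each term for $S$ lies in the corresponding term for $T$; this gives $\kinfty{S} \leq \kinfty{T}$. For the reverse inclusion, I will use (iii): $\kinfty{S}$ is a stable object for $T$ that arises by the same process. The most delicate point is showing that $\kinfty{T}$ cannot grow beyond $\kinfty{S}$. My approach would be to show that $\kinfty{T}$ is normalized by $N_S(\kinfty{S})$ and then apply the stability of $\kinfty{S}$ in $S$. For (c), set $K = \kinfty{S}$ and $C = \cc{S}{K}$. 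Then $C$ is normal in $S$. Any abelian subgroup $B$ with $K \cap C < B \leq C$ that is normal in $N_S(K \cap C)$ centralizes $K$. Such a $B$ therefore acts trivially, and in particular not quadratically, so stability forces $B \leq K$. Hence $C \leq K$ by a standard minimal-counterexample argument on $C/(C\cap K)$. Taking $C = \zenter{S}$ gives the final claim.

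I expect the main obstacle to be the reverse inclusion in (a). It requires the uniqueness or fixed-point property of the abstract construction rather than just a containment.
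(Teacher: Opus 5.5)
\textbf{There is a genuine gap, and it lies in (a), the only non-trivial part.}

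Your model of the construction is not the one in \S\ref{dk}. There, $\calK(S,-)$ is inclusion-reversing on subsets of $\sgp{S}$, so the sets $\calK_{i}(S)$ alternate: even terms decrease and odd terms increase. Then $\kinfty{S}=\listgen{\calK^{\infty}(S)}$ with $\calK^{\infty}(S)=\bigcap_{i\text{ even}}\calK_{i}(S)$. There is no increasing chain of subgroups, no closure under adjoining admissible subgroups, and no uniqueness statement (iii). Your (ii) is the right kind of tool, but what actually holds is the two-cycle $\calK(S,\calK^{\infty}(S))=\calK_{\infty}(S)$, $\calK(S,\calK_{\infty}(S))=\calK^{\infty}(S)$ of Lemma~\ref{cs.5}(b).

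Because of the alternation, your forward step (``each term for $S$ lies in the corresponding term for $T$'') is false for even terms. Already $\calK_{0}(S)=\sgp{S}\not\subseteq\sgp{T}=\calK_{0}(T)$. The reverse inclusion is left as a plan about normalizers, which is not what is needed.

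The missing idea is the sandwich of Lemma~\ref{cs.6}: $\calK^{\infty}(S)\subseteq\calK_{i}(T)\subseteq\calK_{i}(S)$ for all even $i$, proved by induction.
\begin{itemize}
\item Applying the antitone map $\calK(S,-)$ gives $\calK_{\infty}(S)\supseteq\calK(S,\calK_{i}(T))\supseteq\calK_{i+1}(S)$.
\item Intersect with $\sgp{T}$. The middle term becomes $\calK_{i+1}(T)$ by the restriction rule $\calK(T,\Sigma)=\sgp{T}\cap\calK(S,\Sigma)$, valid since $\Sigma=\calK_{i}(T)\subseteq\sgp{T}$. The ends are unaffected because $\calK_{\infty}(S)\subseteq\calK^{\infty}(S)\subseteq\sgp{T}$.
\item Apply $\calK(S,-)$ and intersect with $\sgp{T}$ once more to reach index $i+2$.
\end{itemize}
Intersecting the sandwich over even $i$ gives $\calK^{\infty}(T)=\calK^{\infty}(S)$, which yields both inclusions at once. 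Note that your (i), a one-way restriction for a fixed ambient group, does not capture the real difficulty: the sets $\Sigma$ used for $S$ and for $T$ differ from the very first step.

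\textbf{Your argument for (b) misreads the admissibility condition.} Condition (c) in the definition of $\calK(S,\Sigma)$ concerns how members of $\Sigma$ act on $A/\zenter{A}$. So when $A$ is tested, it is the group acted upon, not the actor. Also, $[V,A,A]$ need not lie in $[A,A]$ for a general section $V$. That computation, with $A\normal S$ acting on a subgroup of $S$, is Lemma~\ref{tp.1}, a different fact.

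The correct argument is immediate. If $A$ is abelian and normal in $S$, then it is normalized by every subgroup and has class at most two. Condition (c) holds automatically because $[A,B]\leq A=\zenter{A}$. Hence $A\in\calK_{i}(S)$ for every $i$, so $A\in\calK^{\infty}(S)$.

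\textbf{With (b) in hand, your positivity argument and your argument for (c) are fine.} The subgroup $B$ you build in (c) is abelian and normal in $S$, since $\nn{S}{K\cap C}=S$. So (b) gives $B\leq K$, hence $B\leq K\cap C$, a contradiction. The paper instead takes a maximal abelian normal subgroup $A$ and uses $\cc{S}{\kinfty{S}}\leq\cc{S}{A}=A\leq\kinfty{S}$, which rests on the same standard fact.
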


\noindent Recall that the first assertion means that $K^{\infty}$ is a map
that sends each $p$-group $S$ to a subgroup $\kinfty{S}$ of $S$
with the properties
\begin{itemize}
    \item   $\kinfty{S\theta} = \kinfty{S}\theta$
            for any isomorphism $\theta$ with domain $S$ and

    \item   $\kinfty{S} \not= 1$ if $S \not= 1$.
\end{itemize}
In particular,
$\kinfty{S}$ is a characteristic subgroup of $S$.

\begin{TheoremB}
    Let $p \geq 5$ be a prime.
    Then $K^{\infty}$ controls $p$-transfer in all groups.
\end{TheoremB}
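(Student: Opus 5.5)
The plan follows Glauberman's original argument in \cite{G1}. I would replace only those properties of his $K^{\infty}$ that are actually used by the corresponding properties of our $K^{\infty}$: those recorded in Theorem A and those coming from the no-quadratic-action condition of \S\ref{dk}.

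Recall that $K^{\infty}$ controls $p$-transfer in $G$ if, for $S \in \syl{p}{G}$, the largest abelian $p$-factor groups satisfy
\[
G/\oupper{p}{G}G' \cong \normal{G}{\kinfty{S}}/\oupper{p}{\normal{G}{\kinfty{S}}}\normal{G}{\kinfty{S}}'.
\]
Equivalently, $S \cap G'\oupper{p}{G} = S \cap \normal{G}{\kinfty{S}}'\oupper{p}{\normal{G}{\kinfty{S}}}$. I would argue by induction on $|G|$, with $G$ a minimal counterexample and $S \in \syl{p}{G}$, writing $K = \kinfty{S}$.

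The first step is the standard reduction via Grün's theorems and the focal subgroup theorem. The focal subgroup is generated by elements $x^{-1}x^g$ with $x, x^g \in S$. By Alperin's fusion theorem, we may take $g$ in $\normal{G}{T}$ for tame intersections $T = S \cap S^g$, where $T$ is a Sylow intersection with $\normal{S}{T} \in \syl{p}{\normal{G}{T}}$. For each such $T \neq S$, the group $H = \normal{G}{T}$ is smaller than $G$. Induction gives control of transfer in $H$ by $\kinfty{\normal{S}{T}}$. The key point is to push this into $\normal{G}{K}$. Here Theorem A(a) does the work: once one shows $K \leq \normal{S}{T}$ in the relevant cases, it yields $\kinfty{\normal{S}{T}} = K$. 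A standard argument then shows that $\normal{S}{T}$-fusion can be moved into $\normal{G}{K}$ modulo $p$-transfer. By this route I would reduce to the case where $K$ is not normal in $G$, $G = \oupper{p'}{G}$ or similar, and every proper subgroup containing $S$ is controlled by $\normal{G}{K}$. The conclusion is then that $G$ is $p$-constrained with $\oupper{p}{G}$ small, essentially $\oupper{p'}{G} = G$ and $\ofitting{p}{G} \neq 1$.

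The second step treats the $p$-constrained configuration: $G = \ofitting{p}{G}S$ and $V$ a minimal normal $p$-subgroup, or a chief factor in $\ofitting{p}{G}$, on which $G/\cc{G}{V}$ acts faithfully with $\ofitting{p}{G/\cc{G}{V}} = 1$. By Theorem A(b), $K$ contains every abelian normal subgroup of $S$. By Theorem A(c), $\cc{S}{K} \leq K$. Combining these with the no-quadratic-action condition built into the definition of $K^{\infty}$, I expect to show that if $K$ fails to normalize $\ofitting{p}{G}$-level structure, then some element of order $p$ in $K$ (or a subgroup of $K$) acts quadratically on $V$. This would make $G/\cc{G}{V}$ contain a quadratic-action subgroup, and for $p \geq 5$ Glauberman's replacement/quadratic-action theorem forces $\sl{2}{p}$ sections, i.e. $\qd{p}$-involvement. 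The transfer version needs more: for $p \geq 5$ one must show that the quadratically acting elements lie in $\oupper{p}{G}G'$, or else cause no transfer discrepancy. This is where the hypothesis $p \geq 5$ enters, through the Hall--Higman type fact that a $p$-element acting quadratically with $p \geq 5$ has minimal polynomial of degree at most $2 < p-1$. That fact gives the focal-subgroup inclusion in $\sl{2}{p}$-type groups, since such elements are commutators there.

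The main obstacle is this second step: verifying that our simplified no-quadratic-action definition supplies precisely the input (Glauberman's ``$K^{\infty}(S)$ acts without quadratic elements on suitable sections'') that the transfer argument in \cite{G1} uses. I would first try to isolate a lemma stating that for $G$ $p$-constrained with $\oupper{p'}{G}=1$, $p\ge5$, the focal subgroup satisfies $S \cap G' \le \normal{G}{K}'\,(S \cap \oupper{p}{G})$. I would attempt to prove it by taking $V = \omegaone{\zenter{\ofitting{p}{G}}}$, applying Theorem A(b),(c) to put $V \leq K$, and then invoking the quadratic-action machinery.
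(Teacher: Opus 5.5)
There is a genuine gap. Neither of your two steps is actually carried out, and the second, which you yourself flag as the main obstacle, is where all the content lies.

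In the first step you assert that a ``standard argument'' moves $\nn{S}{T}$-fusion into $\nn{G}{K}$ modulo $p$-transfer. You also assume $K \leq \nn{S}{T}$ without proof and then claim a reduction to a $p$-constrained configuration. None of this is justified. An Alperin-type fusion argument cannot work as such, because $K^{\infty}$ does not control fusion in general. For example, in $G = \qd{p}$ a Sylow $p$-subgroup $S$ is extraspecial of order $p^{3}$, so $[S,B] \leq \zenter{S}$ for every $B \leq S$ and hence $\kinfty{S} = S$. But elements of $\zenter{S}$ are $G$-conjugate to elements of $\oo{p}{G} \setminus \zenter{S}$, and $\nn{G}{S}$ cannot realize this.

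The reduction you actually need, from control of transfer to a local statement about sections of characteristic $p$, is exactly the Glauberman--Scott Theorem~\ref{t.1}, which the paper simply quotes. Combined with Theorem~\ref{t.2} for a critical subgroup $A$ of $\oo{p}{G}$, it reduces Theorem~B to the following: if $G$ has characteristic $p$, $S \in \syl{p}{G}$ and $\kinfty{S} \notnormal G$, find a single $g \in S \setminus \oo{p}{G}$ with $[A,g,g,g] \leq \zenter{A}$ and $[\zenter{A},g,g] = 1$. Theorem~\ref{t.2} with $i=3$, $j=2$ then gives $[V,g;4]=1$ on every $G$-chief factor $V$ of $\oo{p}{G}$, and $4 \leq p-1$.

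The missing idea in your second step is to test the chain $\calK_{i}(S)$ against the critical subgroup $A$, which has class at most two and is normal in $S$. Your choice $\Omega_{1}(\zenter{\oo{p}{G}})$ only tells you that this subgroup lies in $\kinfty{S}$, which says nothing about whether $\kinfty{S} \normal G$. There are two cases.

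If $A \notin \calK_{3}(S)$, Lemma~\ref{tp.2} already gives $g \in S \setminus \oo{p}{G}$ with $[A,g,g] \leq \zenter{A}$ and $[\zenter{A},g,g]=1$.

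If $A \in \calK_{3}(S)$, apply Lemma~\ref{cs.6} (that is, Theorem~A(a)) with $T = \oo{p}{G}$, not with normalizers of tame intersections. Since $\kinfty{S} \notnormal G$, there is $B \in \calK^{\infty}(S) \subseteq \calK_{4}(S)$ with $B \not\leq \oo{p}{G}$; otherwise $\kinfty{S} = \kinfty{\oo{p}{G}} \normal G$. The definition of $\calK_{4}(S)$ gives $A \leq \nn{S}{B}$ and $B$ of class at most two, so $[A,B,B,B] \leq [B,B,B] = 1$. Lemma~\ref{tp.1}, applied to the abelian normal subgroup $\zenter{A}$, gives $[\zenter{A},B,B]=1$. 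Now take any $g \in B \setminus \oo{p}{G}$.

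This also corrects your account of where $p \geq 5$ enters. In the second case the construction yields only cubic action on $A$, not quadratic action. It is the resulting bound $[V,g;4]=1$ that needs $p-1 \geq 4$. If one element acting quadratically on every chief factor of $\oo{p}{G}$ were available, as your ``degree $2 < p-1$'' remark presupposes, Theorem~\ref{t.1} would give the result for $p=3$ as well. No Hall--Higman or $\spl{2}{p}$ argument enters the proof of Theorem~B.
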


\noindent Recall that the group has characteristic $p$ if $\cc{G}{\oo{p}{G}} \leq \oo{p}{G}$.
To say that $g$ acts quadratically on $V$ means $[V,g,g] = 1$.

\begin{TheoremC}
    Let $p \geq 3$ be a prime and $G$ a group with characteristic $p$.
    Let $S$ be a subgroup of $G$ with $\oo{p}{G} \leq S$.
    Then
    \begin{itemize}
        \item[(a)]  $\kinfty{S} \normal G$ or

        \item[(b)]  there exists $g \in S \setminus \oo{p}{G}$ and
                    a $G$-chief factor of $\oo{p}{G}$ on which
                    $g$ acts nontrivially and quadratically.
    \end{itemize}
\end{TheoremC}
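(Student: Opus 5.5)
The plan is to reduce Theorem C to showing that $\kinfty{S} \leq \oo{p}{G}$ unless alternative (b) holds. Write $Q = \oo{p}{G}$, so $Q \leq S$ by hypothesis. If $\kinfty{S} \leq Q$, then $\kinfty{S} \leq Q \leq S$, and Theorem A(a) gives $\kinfty{S} = \kinfty{Q}$. Since $K^{\infty}$ is a characteristic $p$-functor, $\kinfty{Q}$ is characteristic in $Q \normal G$, so $\kinfty{S} \normal G$, which is (a). It therefore suffices to assume $\kinfty{S} \not\leq Q$ and produce an element $g \in S \setminus Q$ and a $G$-chief factor of $Q$ on which $g$ acts nontrivially and quadratically.

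First I will establish a standard fact. If $x \in S$ centralizes every factor of some $G$-chief series of $Q$, then $x \in Q$. The subgroup $C$ of $G$ centralizing all factors of a fixed $G$-chief series of $Q$ is normal in $G$. Moreover $C/\cc{C}{Q}$ is a $p$-group, because stabilizers of normal series act as $p$-groups. Since $G$ has characteristic $p$, we have $\cc{C}{Q} \leq \cc{G}{Q} \leq Q$, so $C$ is a normal $p$-subgroup of $G$ and hence $C \leq Q$. Consequently every element of $S \setminus Q$ acts nontrivially on some $G$-chief factor $V$ of $Q$. The remaining task is to upgrade nontrivial action to nontrivial quadratic action for a suitably chosen element.

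For this I will use the inductive construction of $\kinfty{S}$ from \S\ref{dk}: an ascending chain $K_0 \leq K_1 \leq \cdots$ terminating in $\kinfty{S}$, where each step adjoins elements or subgroups subject to the no quadratic action condition relative to the previous term. I will prove by induction along the chain that either (b) holds or each $K_i$ lies in $Q$. If all $K_i \leq Q$ we are done by the first paragraph. Otherwise take the first index $i$ with $K_{i+1} \not\leq Q$. By induction $K_i \leq Q$, so as above $K_i = \kinfty{K_i}$-type data are determined inside $Q$, and the relevant objects are normal in $G$ (using Theorem A(b),(c) to get abelian normal subgroups of $Q$ and $\cc{Q}{K_i} \leq K_i$ where needed). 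Pick an element $x \in K_{i+1} \setminus Q$ witnessing the step. Its defining property should then translate, on a $G$-chief factor $V$ inside the normal subgroup built at stage $i$, into the existence of an element of $\gen{x}K_i$ outside $Q$ acting quadratically but nontrivially on $V$. Here I would use $p \geq 3$ together with a Thompson/Timmesfeld replacement argument: if an abelian (or suitably small) $p$-subgroup $A$ acts nontrivially on $V$, replace $A$ by $\cc{A}{[V,A]}\,[V,A]$-type subgroups until some element acts quadratically.

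The main obstacle is this last translation. The no quadratic action condition in the definition refers to action on sections of $S$ built by the chain, while (b) demands quadratic action on a $G$-chief factor of $Q$. I would first try to show that the sections appearing at stage $i$ are $G$-invariant, using induction and the fact that they are built characteristically inside $Q$. I would then refine to a $G$-chief series through them, and use that quadratic action on a module passes to a subquotient, while nontriviality can be located on some chief factor by the fact proved in the second paragraph. Ensuring that the same element is simultaneously nontrivial and quadratic on one chief factor is the delicate point. There I would choose $g$ to minimize the number of chief factors on which it acts nontrivially, or work with a minimal offending subgroup.
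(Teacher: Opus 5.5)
Your first two paragraphs are fine. $\kinfty{S} \leq Q$ yields (a) through Theorem~A(a), and the stabilizer in $G$ of a $G$-chief series of $Q$ lies in $Q$. The gap is everything after that.

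First, the object you propose to induct along is not the construction of \S\ref{dk}. The $\calK_{i}(S)$ are \emph{sets} of subgroups. The even ones decrease from $\calK_{0}(S) = \sgp{S}$ to $\calK^{\infty}(S)$, the odd ones increase to $\calK_{\infty}(S)$, and $\kinfty{S}$ is generated by the limit of the \emph{decreasing} chain. So the construction gives no ascending chain of subgroups ending at $\kinfty{S}$. Consequently ``the first index $i$ with $K_{i+1} \not\leq Q$'' has no meaning, and an induction starting at $\calK_{0}(S)$ fails at once.

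Second, the decisive step is only asserted (``should then translate''). That step is turning condition (c) of the definition into an element of $S \setminus Q$ acting nontrivially and quadratically on a $G$-chief factor of $Q$. The tool you name does not fit. A Thompson/Timmesfeld replacement argument needs an offender-type hypothesis such as $|V/\cc{V}{A}| \leq |A/\cc{A}{V}|$, and nothing here supplies it. Moreover, condition (c) concerns quadratic action on $X/\zenter{X}$ for members $X$ of class at most two, not on $G$-modules. Your ``delicate point'' of making one element both nontrivial and quadratic on the same chief factor is left open.

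The missing ideas are as follows.

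\emph{Use a critical subgroup.} Fix a critical subgroup $A$ of $Q$. It has class at most two, so it can lie in the $\calK_{i}(S)$. It is normal in $G$. By Theorem~\ref{sr.1}(d) and Lemma~\ref{sr.2}, any element acting trivially on every $G$-chief factor of $A$ lies in $Q$.

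\emph{Argue by contraposition.} If (b) fails, every $g \in S$ acting quadratically on a $G$-chief factor of $Q$ acts trivially on it. So it suffices to show that each $B \in \calK^{\infty}(S)$ acts quadratically on \emph{every} $G$-chief factor of $A$, and your delicate point disappears. No induction is needed, only the levels $2,3,4$.

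\emph{Show $A \in \calK_{3}(S)$.} If not, a witness $B \in \calK_{2}(S)$ gives $g \in B \setminus Q$ with $[A,g,g] \leq \zenter{A}$ and $[\zenter{A},g,g]=1$ (Lemmas~\ref{tp.1} and~\ref{tp.2}). The failure of (b) then forces $g \in Q$, a contradiction.

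\emph{Handle $B \in \calK^{\infty}(S) \subseteq \calK_{4}(S)$.} Apply condition (c) to the abelian normal subgroup $\zenter{A}$ to get $[\zenter{A},B,B]=1$. Hence $B$ acts trivially on the chief factors of $\zenter{A}$, and $L = B\cc{G}{\zenter{A}}$ is subnormal in $G$. Next apply condition (c) to $\widetilde{A} = \zenter{A \bmod \zenter{A}\cap\zenter{B}}$, together with $A \leq \nn{S}{B}$. This gives quadratic action of $B$ on every $L$-chief factor of $A$ (Lemma~\ref{tp.3}). That lifts to every $G$-chief factor by Lemma~\ref{sr.3}, so $B \leq Q$.

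Neither $p \geq 3$ nor any replacement theorem enters.
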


\noindent The group $\qd{p}$ is the semidirect product of $\spl{2}{p}$
with its natural module.
To say that the group $G$ is $\qd{p}$-free means it is not possible to find
$K \normal H \leq G$ with $H/K \isom \qd{p}$.

\begin{CorollaryD}
    Let $p \geq 3$ be a prime and $G$ a $\qd{p}$-free group with characteristic $p$.
    Let $S$ be a $p$-subgroup of $G$ with $\oo{p}{G} \leq S$.
    Then \[
        \kinfty{S} \normal G.
    \]
\end{CorollaryD}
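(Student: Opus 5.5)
We deduce Corollary D from Theorem C. Apply Theorem C to $G$ and $S$; if alternative (a) holds we are done. So suppose (b) holds. Then there is $g \in S \setminus \oo{p}{G}$ and a $G$-chief factor $V = A/B$ of $\oo{p}{G}$ on which $g$ acts nontrivially and quadratically. The plan is to show that this forces a section of $G$ isomorphic to $\qd{p}$, contradicting the hypothesis that $G$ is $\qd{p}$-free.

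First we set up a faithful irreducible module. Put $\bar G = G/\cc{G}{V}$. Since $V$ is a chief factor of the $p$-group $\oo{p}{G}$, it is elementary abelian and irreducible as a $\mathbb{F}_p G$-module. By a standard lemma, $\oo{p}{G}$ acts trivially on every $G$-chief factor inside $\oo{p}{G}$, so $\oo{p}{G} \leq \cc{G}{V}$. Moreover $\oo{p}{\bar G} = 1$: the preimage of $\oo{p}{\bar G}$ acts on the irreducible module $V$ through a normal $p$-subgroup, which has nonzero fixed points forming a submodule, hence acts trivially. Let $\bar g$ be the image of $g$. Then $\bar g \neq 1$ because $g$ acts nontrivially. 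Also $\bar g$ has $p$-power order, since $g \in S$ and $S$ is a $p$-group, and $\bar g$ acts quadratically on $V$ because $[V,g,g]=1$.

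The key step, and the main obstacle, is to produce $\spl{2}{p}$ from a quadratic element when $p$ is odd. Here I would invoke the standard consequence of the Glauberman--Thompson / Baumann-type argument, as in Glauberman's treatment of $\qd{p}$-free groups: if $\bar G$ acts faithfully on a module $V$ over $\mathbb{F}_p$ with $p$ odd and $\oo{p}{\bar G} = 1$, and some nontrivial $p$-element acts quadratically, then $\bar G$ has a section involving $\qd{p}$. More concretely, one proves that $\bar g$ lies in a subgroup $L \leq \bar G$ with $L/\oo{p'}{L}$ or $L$ itself isomorphic to $\spl{2}{p}$ (or $\mathrm{SL}_2(p^n)$, containing $\spl{2}{p}$), acting on a submodule $W \leq V$ with $[W,L]$ containing the natural module. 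To obtain this, I would take $\bar g$ and a conjugate $\bar h$ with $\langle \bar g,\bar h\rangle$ not a $p$-group, which exists because $\oo{p}{\bar G}=1$ (Baer--Suzuki). Then I would analyse $\langle \bar g, \bar h\rangle$ on $[V,\bar g]+[V,\bar h]$ using quadratic action, which gives the $\spl{2}{p}$ structure; this is where $p \geq 3$ is needed. If this cannot be quoted directly, I would reduce to a minimal counterexample and use Glauberman's lemma on quadratic pairs.

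Finally we lift back to $G$. Take the $\spl{2}{p}$ section $L$ together with its natural module $W_0$, realised as a section of $V = A/B$. Pulling back gives subgroups $K \normal H \leq G$ with $H/K \isom W_0 \rtimes \spl{2}{p} = \qd{p}$. One can take $H$ to be the preimage of $L$ times the preimage of the appropriate subquotient of $V$, quotiented by $\cc{}{}$-kernels and by the complementary part. This contradicts the assumption that $G$ is $\qd{p}$-free, so (b) fails and $\kinfty{S} \normal G$.
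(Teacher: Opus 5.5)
Your proposal is correct and follows the paper's route. The paper also deduces Corollary~D from Theorem~C by excluding alternative (b) with Glauberman's result (\cite[Lemma~6.3]{G3} or \cite[Theorem~B]{F2}): for odd $p$, a $p$-element acting nontrivially and quadratically on a $G$-chief factor of $\oo{p}{G}$ forces $G$ to involve $\qd{p}$. Your Baer--Suzuki plus quadratic-pair sketch is the standard proof of that result.

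Two small corrections. First, it is $G$ (via the section $G/B$ acting on $V$) that involves $\qd{p}$, not $\bar G$ itself; for example, $\spl{2}{p}$ acting on its natural module does not involve $\qd{p}$. Second, the lifting step needs a complement to the natural module. That complement exists because the central involution of $\spl{2}{p}$ inverts the natural module when $p$ is odd, so a Frattini argument on a $2$-element mapping to it provides one.
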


Results similar to Theorem~C and Corollary~D hold for other characteristic $p$-functors.
Notably Glauberman's $ZJ$-Theorem \cite{G3} and
the Glauberman-Solomon Theorem \cite{GS}.
However, they require $S$ to be a Sylow $p$-subgroup of $G$.
For some applications,
this is too restrictive,
see for example \cite{F1} and \cite{GLS}.

Theorem~A follows readily from the definition of $K^{\infty}$ and
general properties of its construction.
Given some heavy material related to transfer,
which is summarised in \S\ref{t},
Theorem~B follows quickly.
By contrast, the proof of Theorem~C is self contained apart from standard material.

\section{Chains of subgroups} \label{cs}
We construct characteristic functors $K^{\infty}$ with the property:
\[
    \textit{whenever $\kinfty{S} \leq T \leq S$ then $\kinfty{S} = \kinfty{T}$}.
\]

\begin{Notation} \label{cs.1}
    For a group $S$, let $\sgp{S}$ denote the set of subgroups of $S$.
    If $\Sigma \subseteq \sgp{S}$ then $A \leq\in \Sigma$ means that $A$
    is a subgroup of a member of $\Sigma$.
\end{Notation}

\begin{Hypothesis} \label{cs.2}
    The map $\calK(-,-)$ assigns to
    each group $S$ and $\Sigma \subseteq \sgp{S}$
    a subset \[
        \calK(S,\Sigma) \subseteq \sgp{S}
    \]
    and the following hold:
    \begin{itemize}
        \item[(a)]  If $\Delta \subseteq \Sigma$ then \[
                        \calK(S,\Delta) \supseteq \calK(S,\Sigma).
                    \]

        \item[(b)]  If $T \leq S$ and $\Sigma \subseteq \sgp{T}$ then \[
                        \calK(T,\Sigma) = \sgp{T} \cap \calK(S,\Sigma).
                    \]
    \end{itemize}
\end{Hypothesis}

\begin{Example} \label{cs.3}
    \begin{multline*}
        \calK(S,\Sigma) = \\ \set{ A \in \sgp{S} }{ %
            \text{$A$ is abelian and is normalized by each member of $\Sigma$} }.
    \end{multline*}
\end{Example}

\begin{Definition} \label{cs.4}
    Assume Hypothesis~\ref{cs.2} and let $S$ be a group.
    \begin{itemize}
        \item[(a)]  The sequence $\left( \calK_{i}(S) \right)_{i=0}^{\infty}$
                    of subsets of $\sgp{S}$ is defined by \[
                        \text{$\calK_{0}(S) = \sgp{S}$ \;and\; %
                            $\calK_{i+1}(S) = \calK(S,\calK_{i}(S))$ %
                            \, for all $i \geq 0$.}
                    \]

        \item[(b)]  $\calK^{\infty}(S) = \bigcap_{\text{$i$ even}} \calK_{i}(S)$.

        \item[(c)]  $\calK_{\infty}(S) = \bigcup_{\text{$i$ odd}} \calK_{i}(S)$.

        \item[(d)]  $\kinfty{S} = \listgen{\, \calK^{\infty}(S) \, }$,
                    the subgroup generated by the members of $\calK^{\infty}(S)$.
    \end{itemize}
\end{Definition}

\noindent Observe that if $\calK(S,\Sigma)\alpha = \calK(S,\Sigma\alpha)$
for each $\alpha \in\aut{S}$ and $\Sigma \subseteq \sgp{S}$
then each $\calK_{i}(S)$ is $\aut{S}$-invariant and
$\kinfty{S}$ is a characteristic subgroup of $S$.

\newpage

Throughout the remainder of this section we assume that \[
    \text{$\calK(-,-)$ satisfies Hypothesis~\ref{cs.2} and that $S$ is a group.}
\]

\begin{Lemma} \label{cs.5}
    The following hold.
    \begin{itemize}
        \item[(a)]  $\calK_{0}(S) \supseteq \calK_{2}(S) \supseteq \cdots \supseteq \calK^{\infty}(S) %
                \supseteq \calK_{\infty}(S) \supseteq \cdots \supseteq \calK_{3}(S) \supseteq \calK_{1}(S)$.

        \item[(b)]  $\calK(S,\calK^{\infty}(S)) = \calK_{\infty}(S)$ and %
                    $\calK(S,\calK_{\infty}(S)) = \calK^{\infty}(S)$.
    \end{itemize}
\end{Lemma}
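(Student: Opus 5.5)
The whole lemma rests on one observation: by Hypothesis~\ref{cs.2}(a) the map $\Sigma \mapsto \calK(S,\Sigma)$ is order-reversing on subsets of $\sgp{S}$. I will first establish the chain in (a) by induction, and then derive (b) from antitonicity together with the finiteness of $\sgp{S}$. Hypothesis~\ref{cs.2}(b) is not needed here, since $S$ is fixed throughout.

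For (a), the base cases are immediate. Since $\calK_{0}(S) = \sgp{S}$, every subset of $\sgp{S}$ is contained in it, so $\calK_{2}(S) \subseteq \calK_{0}(S)$ and $\calK_{1}(S) \subseteq \calK_{0}(S)$. Applying $\calK(S,-)$ to $\calK_{1}(S) \subseteq \calK_{0}(S)$ reverses the inclusion and gives $\calK_{2}(S) \supseteq \calK_{1}(S)$. Applying $\calK(S,-)$ twice preserves inclusions, so from $\calK_{i+2}(S) \subseteq \calK_{i}(S)$ we obtain $\calK_{i+4}(S) \subseteq \calK_{i+2}(S)$. Starting from $i=0$ this makes the even terms decrease; starting from $\calK_{3}(S) \supseteq \calK_{1}(S)$ (apply $\calK(S,-)$ to $\calK_{2}(S) \subseteq \calK_{0}(S)$), it makes the odd terms increase. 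Next I show that every odd term lies inside every even term. From $\calK_{1}(S) \subseteq \calK_{2}(S)$, applying $\calK(S,-)$ twice gives $\calK_{2k+1}(S) \subseteq \calK_{2k+2}(S)$ for all $k$. Now take $j$ odd and $i$ even, and let $m$ be the larger even index among $i$ and $j+1$. By monotonicity $\calK_{j}(S) \subseteq \calK_{m-1}(S) \subseteq \calK_{m}(S) \subseteq \calK_{i}(S)$. Taking the intersection over $i$ and the union over $j$ yields $\calK^{\infty}(S) \supseteq \calK_{\infty}(S)$, which completes the chain.

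For (b), I use the standing finiteness assumption, so $\sgp{S}$ is finite. A monotone sequence of subsets of a finite set is eventually constant. Hence there is an $N$ such that $\calK_{i}(S) = \calK^{\infty}(S)$ for all even $i \geq N$ and $\calK_{j}(S) = \calK_{\infty}(S)$ for all odd $j \geq N$. Choose an even $i \geq N$. Then $\calK(S,\calK^{\infty}(S)) = \calK(S,\calK_{i}(S)) = \calK_{i+1}(S) = \calK_{\infty}(S)$. Similarly, for an odd $j \geq N$ we get $\calK(S,\calK_{\infty}(S)) = \calK_{j+1}(S) = \calK^{\infty}(S)$.

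The only potential obstacle is the reliance on finiteness in (b). If infinite groups were allowed, I would instead argue directly. Since $\calK^{\infty}(S) \subseteq \calK_{i}(S)$ for each even $i$, antitonicity gives $\calK(S,\calK^{\infty}(S)) \supseteq \calK_{i+1}(S)$, so it contains $\calK_{\infty}(S)$. The reverse inclusion, however, needs some continuity of $\calK(S,-)$ with respect to intersections, and this is not available in general. Since all groups in the paper are finite, the stabilization argument settles it.
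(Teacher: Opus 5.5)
Your proof is correct and follows essentially the same route as the paper. In (a), the order-reversing property of $\calK(S,-)$, together with $\calK_{0}(S)=\sgp{S}$ being the largest subset, gives the whole chain; the paper packages this by applying $\calK(S,-)$ repeatedly to $\calK_{0}(S)\supseteq\calK_{k}(S)$. In (b), stabilization of the monotone sequences in the finite set $\sgp{S}$ gives both equalities, exactly as in the paper.
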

\begin{proof}
    (a). Let $k,l \geq 0$.
    Now $\calK_{0}(S) = \sgp{S} \supseteq \calK_{k}(S)$.
    Applying $\calK(S, -)$ to this $2l$ times gives \[
        \calK_{2l}(S) \supseteq \calK_{k+2l}(S).
    \]
    With $k= 2$ this establishes (a) up to the $\calK^{\infty}(S)$ term.
    Applying $\calK(S, -)$ once more gives \[
        \calK_{2l+1}(S) \subseteq \calK_{k+2l+1}(S).
    \]
    Then (a) holds from the $\calK_{\infty}(S)$ term.
    Moreover $\calK_{2l+1}(S) \subseteq \calK_{j}(S)$ for all even $j > 2l+1$
    whence $\calK_{2l+1} \subseteq \calK^{\infty}(S)$ and then
    $\calK_{\infty}(S) \subseteq \calK^{\infty}(S)$.

    (b). Since $S$ is finite there exists $N$ such that
    $\calK^{\infty}(S) = \calK_{i}(S)$ and $\calK_{\infty}(S) = \calK_{j}(S)$
    for all even $i \geq N$ and odd $j \geq N$.
    Let $i \geq N$ be even.
    Then $\calK(S,\calK^{\infty}(S)) = \calK(S,\calK_{i}(S)) = \calK_{i+1}(S) = \calK_{\infty}(S)$.
    Similarly for $\calK_{\infty}(S)$.
\end{proof}

\begin{Lemma} \label{cs.6}
    Let $T \leq S$ and suppose that \[
        \calK^{\infty}(S) \subseteq \sgp{T} \qtext{or} \kinfty{S} \leq T.
    \]
    Then $\calK^{\infty}(S) = \calK^{\infty}(T)$ and $\kinfty{S} = \kinfty{T}$.
\end{Lemma}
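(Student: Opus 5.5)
We first reduce to the first alternative. If $\kinfty{S} \leq T$ then every member of $\calK^{\infty}(S)$ is a subgroup of $\kinfty{S}$, hence of $T$, so $\calK^{\infty}(S) \subseteq \sgp{T}$. It therefore suffices to assume $\calK^{\infty}(S) \subseteq \sgp{T}$ and prove $\calK^{\infty}(S) = \calK^{\infty}(T)$. The equality $\kinfty{S} = \kinfty{T}$ then follows at once from Definition~\ref{cs.4}(d). By Lemma~\ref{cs.5}(a), all odd terms satisfy $\calK_{j}(S) \subseteq \calK_{\infty}(S) \subseteq \calK^{\infty}(S) \subseteq \sgp{T}$.

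\emph{Inclusion $\calK^{\infty}(S) \subseteq \calK^{\infty}(T)$.} Since $\calK^{\infty}(S)$ and $\calK_{\infty}(S)$ are subsets of $\sgp{T}$, Hypothesis~\ref{cs.2}(b) together with Lemma~\ref{cs.5}(b) gives
\[
\calK(T,\calK^{\infty}(S)) = \sgp{T} \cap \calK_{\infty}(S) = \calK_{\infty}(S)
\qtext{and}
\calK(T,\calK_{\infty}(S)) = \sgp{T} \cap \calK^{\infty}(S) = \calK^{\infty}(S).
\]
I would then prove by induction on $i$ that $\calK_{2i}(T) \supseteq \calK^{\infty}(S)$ and $\calK_{2i+1}(T) \subseteq \calK_{\infty}(S)$. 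The base case is $\calK_{0}(T) = \sgp{T} \supseteq \calK^{\infty}(S)$. For the inductive step, apply the order-reversal of Hypothesis~\ref{cs.2}(a) in $T$ together with the two displayed identities. Intersecting over even $i$ gives this inclusion.

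\emph{Inclusion $\calK^{\infty}(T) \subseteq \calK^{\infty}(S)$.} I would show by induction that $\calK_{2i}(T) \subseteq \calK_{2i}(S)$ and $\calK_{2i+1}(T) \supseteq \calK_{2i+1}(S)$. The base case is $\calK_{0}(T) = \sgp{T} \subseteq \sgp{S}$. Since each $\calK_{k}(T)$ is a subset of $\sgp{T}$, Hypothesis~\ref{cs.2}(b) applies to it. Given $\calK_{2i}(T) \subseteq \calK_{2i}(S)$, we get
\[
\calK_{2i+1}(T) = \sgp{T} \cap \calK(S,\calK_{2i}(T)) \supseteq \sgp{T} \cap \calK_{2i+1}(S) = \calK_{2i+1}(S),
\]
where the last equality uses that odd terms of the $S$-sequence lie in $\sgp{T}$. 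Then
\[
\calK_{2i+2}(T) = \sgp{T} \cap \calK(S,\calK_{2i+1}(T)) \subseteq \calK(S,\calK_{2i+1}(S)) = \calK_{2i+2}(S).
\]
Intersecting over even indices gives the inclusion.

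The only delicate point is the bookkeeping: Hypothesis~\ref{cs.2}(b) may be applied only to families contained in $\sgp{T}$. This is why the hypothesis $\calK^{\infty}(S) \subseteq \sgp{T}$, and the fact that it controls all odd terms of the $S$-sequence, is exactly what is needed. Beyond this, the proof is routine.
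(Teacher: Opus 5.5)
Your proof is correct and is essentially the paper's argument. The paper proves the sandwich $\calK^{\infty}(S) \subseteq \calK_{i}(T) \subseteq \calK_{i}(S)$ for all even $i$ in a single induction. You split it into two inductions, one for the lower bound and one for the upper bound. For the lower bound you work with $\calK(T,-)$ and the identities you computed in advance, whereas the paper applies $\calK(S,-)$ and Lemma~\ref{cs.5}(b) and then intersects with $\sgp{T}$.
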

\begin{proof}
    In both cases $\calK^{\infty}(S) \subseteq \sgp{T}$.
    The second conclusion follows from the first.
    From the definition of $\calK^{\infty}$ it suffices to show that
    \begin{equation} \tag{$*$}
        \calK^{\infty}(S) \subseteq \calK_{i}(T) \subseteq \calK_{i}(S)
    \end{equation}
    for all even $i \geq 0$.
    Now $\calK_{0}(T) = \sgp{T}$ and $\calK_{0}(S) = \sgp{S}$
    so $(*)$ holds when $i=0$ by hypothesis.

    Let $i \geq 0$ and assume that $(*)$ holds.
    Applying $\calK(S,-)$ and Lemma~\ref{cs.5}(b) we obtain
    \begin{equation} \tag{$**$}
        \calK_{\infty}(S) \supseteq \calK(S,\calK_{i}(T)) \supseteq \calK_{i+1}(S).
    \end{equation}
    The definition of $\calK_{i+1}(T)$ and Hypothesis~\ref{cs.2}(b) yield \[
        \calK_{i+1}(T) = \calK(T,\calK_{i}(T)) = \sgp{T} \cap \calK(S,\calK_{i}(T)).
    \]
    Now $\calK_{\infty}(S) \subseteq \calK^{\infty}(S) \subseteq \sgp{T}$ so $(**)$ becomes \[
        \calK_{\infty}(S) \supseteq \calK_{i+1}(T) \supseteq \calK_{i+1}(S).
    \]
    A further application of $\calK(S,-)$ gives \[
        \calK^{\infty}(S) \subseteq \calK(S,\calK_{i+1}(T)) \subseteq \calK_{i+2}(S).
    \]
    Again we have \[
        \calK_{i+2}(T) = \calK(T,\calK_{i+1}(T)) = \sgp{T} \cap \calK(S,\calK_{i+1}(T)).
    \]
    Since $\calK^{\infty}(S) \subseteq \sgp{T}$ we obtain \[
        \calK^{\infty}(S) \subseteq \calK_{i+2}(T) \subseteq \calK_{i+2}(S).
    \]
    By induction, $(*)$ holds for all $i \geq 0$ and the proof is complete.
\end{proof}

\section{The definition of $K^{\infty}$} \label{dk}

For the remainder of these notes we let $p$ be a prime and
define a map $\calK(-,-)$ as follows: \[
    \begin{minipage}[c]{0.8\textwidth} \em
        for each $p$-group $S$ and $\Sigma \subseteq \sgp{S}$
        then $\calK(S,\Sigma)$ consists of those
        $A \in \sgp{S}$ that satisfy:
        \begin{itemize}
            \item[(a)]  $A$ has nilpotency class at most two,

            \item[(b)]  $A$ is normalized by each element of $\Sigma$ and

            \item[(c)]  whenever $B \leq\in \Sigma$ satisfies $[A,B,B] \leq \zenter{A}$
                        then $[A,B] \leq \zenter{A}$.
        \end{itemize}
    \end{minipage}
\]

\noindent Recall that (a) means $[A,A] \leq \zenter{A}$.
In particular this holds if $A$ is abelian.
Condition (c) may be paraphrased as \[
    \textit{$A/\zenter{A}$ admits no nontrivial quadratic action from $\Sigma$}
\]
and is thus reminiscent of the definition of the Glauberman-Solomon subgroup
as given in \cite{F2}.

Hypothesis~\ref{cs.2} is satisfied and we have the notation \[
     \calK_{i}(S), \quad \calK^{\infty}(S) %
    \qtext{and} \kinfty{S}
\]
as defined in Definition~\ref{cs.4}

\begin{proof}[Proof of Theorem~A]
    It follows directly from the definition that $K^{\infty}$
    is a characteristic $p$-functor.
    Lemma~\ref{cs.6} implies (a).
    Let $A$ be an abelian normal subgroup of $S$.
    Since $A = \zenter{A}$, condition (c) in
    the definition of $\calK(S,\Sigma)$ always holds and
    so $A \in \calK_{i}(S)$ for all $i$.
    Hence $A \in \calK^{\infty}(S)$ and $A \leq \kinfty{S}$.
    Suppose now that $A$ is a maximal abelian normal subgroup of $S$.
    Then $\zenter{S} \leq \cc{S}{\kinfty{S}} \leq \cc{S}{A} = A \leq \kinfty{S}$
    (if not let $\BAR{S} = S/A$ and $B = \listgen{A,x}$
    where $1 \not= \BAR{x} \in \BAR{\cc{S}{A}} \cap \zenter{\BAR{S}}$.)
    Finally if $S \not=1$ then $\zenter{S} \not= 1$ so $K^{\infty}$ is positive.
\end{proof}

\section{Standard results} \label{sr}

\begin{Theorem}[Thompson {\cite[5.3.11]{Gor}}] \label{sr.1}
    Let $P$ be a $p$-group.
    Then $P$ contains a characteristic subgroup $A$ with the properties:
    \begin{itemize}
        \item[(a)]  $A/\zenter{A}$ is elementary abelian.
                    In particular, $A$ has nilpotency class at most two.

        \item[(b)]  $[A,P] \leq \zenter{A}$.

        \item[(c)]  $\cc{P}{A} = \zenter{A}$.

        \item[(d)]  The only $p'$-automorphism of $P$ that centralizes
                    $A$ is the identity automorphism.
    \end{itemize}
\end{Theorem}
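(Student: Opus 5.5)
We need to prove Thompson's critical subgroup theorem. The plan is to construct $A$ explicitly as a maximal characteristic subgroup of a suitable type and then verify (a)--(d).

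\emph{Choice of $A$.} Among the characteristic subgroups $C$ of $P$ satisfying
\[
[C,P] \leq \zenter{C} \qtext{and} \frat{C} \leq \zenter{C},
\]
we choose a maximal one, $C = A$. Note that $C = \zenter{P}$ qualifies, so such subgroups exist. Properties (a) and (b) then hold by construction: $A/\zenter{A}$ is elementary abelian, and $A$ has class at most two since $[A,A] \leq [A,P] \leq \zenter{A}$.

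\emph{Property (c).} Put $Z = \zenter{A}$ and $D = \cc{P}{A}$, which is characteristic in $P$. Clearly $Z \leq D$. Suppose $Z < D$; we derive a contradiction to maximality. Since $D \normal P$ and $D \cap A = Z$, there is a characteristic subgroup $X$ of $P$ with $Z < X \leq D$ and $X/Z$ elementary abelian and central in $P/Z$. For instance, take $X/Z = \Omega_1(\zenter{P/Z} \cap D/Z)$, which is nontrivial because $D/Z$ is a nontrivial normal subgroup of the $p$-group $P/Z$. Now set $C = AX$. Then:
\begin{itemize}
\item $X$ centralizes $A$, since $X \leq D$.
\item $[X,P] \leq Z$, since $X/Z$ is central in $P/Z$.
\item $\frat{X} \leq Z$, since $X/Z$ is elementary abelian and $X$ centralizes $Z$.
\end{itemize}
Next we check $\zenter{C} \geq Z$. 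Indeed $Z$ commutes with $A$ (being its centre) and with $X \leq D$. So $[C,P] = [A,P][X,P] \leq Z \leq \zenter{C}$, and $\frat{C} \leq \frat{A}\frat{X}[A,X] \leq Z$, using that $[A,X] = 1$ and that $C$ has class at most two. Thus $C$ is a characteristic subgroup of the required type properly containing $A$, because $X \not\leq A$ (as $X \cap A \leq D \cap A = Z < X$). This contradicts maximality, so $D = Z$. This step is the main obstacle: one must keep careful track of the facts that $X \cap A = Z$ and that $\zenter{C}$ contains $Z$.

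\emph{Property (d).} Let $\alpha$ be a $p'$-automorphism of $P$ centralizing $A$. We apply the standard Thompson $A \times B$ lemma \cite[5.3.4]{Gor} to $\langle\alpha\rangle \ltimes P$ acting on $P$, with $A$ a $p$-subgroup. Concretely, $[P,\alpha]$ centralizes $A$: by the three subgroups lemma, $[A,\alpha,P] = 1$ and $[P,A,\alpha] \leq [Z,\alpha] = 1$ (as $[P,A] \leq Z$ by (b)), so $[\alpha,P,A] = 1$. Hence $[P,\alpha] \leq \cc{P}{A} = Z \leq A$. Then $[P,\alpha,\alpha] = 1$, and coprime action gives $[P,\alpha] = [P,\alpha,\alpha] = 1$. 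So $\alpha = 1$.
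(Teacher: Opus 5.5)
Your proof is correct. The paper gives no argument of its own here, only the citation of Gorenstein 5.3.11, and for (a)--(c) you reproduce that standard proof. You choose $A$ maximal among characteristic subgroups $C$ with $\Phi(C)\le Z(C)$ and $[C,P]\le Z(C)$. If $Z=Z(A)<C_P(A)$, you enlarge $A$ by the preimage $X$ of $\Omega_1\bigl(Z(P/Z)\cap C_P(A)/Z\bigr)$. Your checks are all sound: $X$ is characteristic, $Z\le Z(AX)$, $\Phi(AX)\le\Phi(A)\Phi(X)\le Z$, and $X\cap A=Z$. The Frattini bound needs only $[A,X]=1$, not the class-two remark.

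The one genuine variation is in (d). The usual argument lets $I$ be the group of inner automorphisms of $P$ induced by $A$. Since $\alpha$ fixes $A$ pointwise, $\langle\alpha\rangle I=\langle\alpha\rangle\times I$, and $\alpha$ is trivial on $C_P(I)=C_P(A)\le A$, so the $A\times B$ lemma gives $\alpha=1$. That route needs only (c).

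Yours also uses (b), to get $[P,A,\alpha]=1$. The three subgroups lemma then gives $[P,\alpha]\le Z(A)$, and $[P,\alpha]=[P,\alpha,\alpha]=1$ finishes it with nothing beyond elementary coprime action. This is more self-contained. However, it never actually invokes the $A\times B$ lemma, so your sentence announcing that you ``apply'' it is misleading and should be removed.
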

\noindent The subgroup $A$ in the conclusion is
called a \emph{critical subgroup} of $P$.

\begin{Lemma} \label{sr.2}
    Let $G$ be a group, $p$ a prime and
    $A \normal G$ a $p$-subgroup.
    Let $g \in G$ and suppose that
    $g$ acts trivially on each $G$-chief factor of $A$.
    Then \[
        g \in \oo{p}{G \bmod \cc{G}{A}}.
    \]
    If in addition $G$ has characteristic $p$ and
    $A$ is a critical subgroup of $\oo{p}{G}$
    then $g \in \oo{p}{G}$.
\end{Lemma}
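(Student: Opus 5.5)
Let $1 = A_0 < A_1 < \cdots < A_n = A$ be a $G$-chief series of $A$, and set $C = \cc{G}{A}$. The plan is to show that the subgroup
\[
    H = \bigcap_{i=1}^{n} \cc{G}{A_i/A_{i-1}}
\]
satisfies $H/C$ is a $p$-group, i.e.\ $H \leq \oo{p}{G \bmod C}$. Since $H \normal G$ and $g \in H$ by hypothesis, this proves the first assertion. So I would prove the following standard fact: a group of automorphisms of the $p$-group $A$ that stabilizes a normal series, acting trivially on each factor, is a $p$-group.

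To prove this fact, I take an element $h \in H$ of order prime to $p$ modulo $C$, replacing $h$ by a suitable power if necessary. I then show by induction on $i$ that $h$ centralizes $A_i$. The inductive step is the coprime action lemma: if a $p'$-element stabilizes $A_{i-1} \leq A_i$ and centralizes both $A_{i-1}$ and $A_i/A_{i-1}$, then it centralizes $A_i$. Concretely, for $a \in A_i$ we have $a^h = az$ with $z \in A_{i-1}$ central under $\langle h\rangle$-iteration, so $a^{h^m} = az^m$. Taking $m = o(h)$ gives $z^m = 1$, and since $z$ is a $p$-element and $p \nmid m$, this forces $z = 1$. (Alternatively one can cite \cite[5.3.2]{Gor}.) Hence $h \in C$, so every $p'$-element of $H/C$ is trivial and $H/C$ is a $p$-group. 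As it is normal in $G/C$, we get $g \in \oo{p}{G \bmod C}$.

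For the second assertion, let $P = \oo{p}{G}$ and let $A$ be critical in $P$, characteristic and hence normal in $G$. Let $X$ be the full preimage of $\oo{p}{G/C}$, so that $g \in X$. Since $C \cap P = \cc{P}{A} = \zenter{A} \leq P$ and $C \normal G$, the group $C$ normalizes $P$. I claim $C$ is a $p$-group modulo $P$, and in fact $C \leq P$.

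To see this, take a $p'$-element $c \in C$. It acts on $P$ as a $p'$-automorphism centralizing $A$, so by Theorem~\ref{sr.1}(d) it centralizes $P$. Then $c \in \cc{G}{P} \leq P$, since $G$ has characteristic $p$, which gives $c = 1$. Hence $C$ is a $p$-group. It is also normal in $G$, so $C \leq P$.

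It follows that $X/C$ is a $p$-group and $C$ is a $p$-group, so $X$ is a normal $p$-subgroup of $G$. Therefore $X \leq P$, and $g \in \oo{p}{G}$. The main point requiring care is the coprime stabilizer step, together with applying Theorem~\ref{sr.1}(d) to the $p'$-part of $C$. Both are routine.
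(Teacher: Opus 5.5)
Your proposal is correct and follows the paper's proof. For the second assertion you argue exactly as the paper does: Theorem~\ref{sr.1}(d) together with $C_G(O_p(G)) \leq O_p(G)$ shows that $C_G(A)$ is a $p$-group, so the preimage of $O_p(G/C_G(A))$ is a normal $p$-subgroup and hence lies in $O_p(G)$. For the first assertion, which the paper simply calls standard, your coprime stability argument (passing to the $p'$-part of $h$ and using $a^{h^m} = az^m$) is the usual proof and is complete.
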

\begin{proof}
    The first assertion is a standard result.
    To prove the second,
    observe that every $p'$-element of $\cc{G}{A}$ centralizes $\oo{p}{G}$.
    Since $G$ has characteristic $p$,
    this implies that $\cc{G}{A}$ is a $p$-group.
    Then $\oo{p}{G \bmod \cc{G}{A}} = \oo{p}{G}$.
\end{proof}

\begin{Lemma} \label{sr.3}
    Let $G$ be a group, $p$ a prime and
     $A \normal G$ a $p$-subgroup.
     Suppose that $g \in L \subnormal G$ and that $g$ acts quadratically on
     each $L$-chief factor of $A$.
     Then $g$ acts quadratically on each $G$-chief factor of $A$.
\end{Lemma}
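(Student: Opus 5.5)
By induction on the length of a subnormal series from $L$ to $G$, it suffices to treat the case $L \normal G$. The case $L = G$ is trivial, and if $L \normal M \subnormal G$ with $M$ having a shorter subnormal series to $G$, the normal case applied to $L \normal M$ gives the hypothesis for $M$, and induction then finishes. So assume $L \normal G$, and let $U/W$ be a $G$-chief factor of $A$, where $W < U \leq A$ and both are normal in $G$. Set $V = U/W$; it is an irreducible $\mathbb{F}_pG$-module. We must show that $[V,g,g] = 0$.

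The key input is Clifford's theorem: since $L \normal G$, the restriction $V|_L$ is completely reducible. Hence $V = V_1 \oplus \cdots \oplus V_r$ with each $V_i$ an irreducible $L$-submodule. Any $L$-chief series of $A$ through $W$ and $U$ can be refined so that the factors inside $U/W$ correspond to the $V_i$. By Jordan--Hölder, each $V_i$ is $L$-isomorphic to an $L$-chief factor of $A$, and in fact the $V_i$ themselves appear as chief factors of a suitable $L$-chief series, namely $W < W + V_1 < W + V_1 + V_2 < \cdots$. Since quadratic action is an isomorphism-invariant property of $L$-modules, we get $[V_i,g,g] = 0$ for each $i$. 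As $g \in L$ stabilizes each $V_i$, the operator $(g-1)^2$ acts on $V = \bigoplus V_i$ componentwise and therefore kills $V$. Thus $[V,g,g] = 0$, that is, $[U,g,g] \leq W$.

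The one point needing care is that the hypothesis is stated for each $L$-chief factor of $A$. This means the factors in any $L$-chief series, or equivalently, by Jordan--Hölder, all composition factors of $A$ as an $L$-group. The $V_i$ are such factors, so the step is justified. Because $A$ is a $p$-group, we also need the $G$-chief factors to be elementary abelian, so that the module language applies. This holds since $[U,U] < U$ and $U^p$ are normal in $G$, which forces $U/W$ to be elementary abelian. Quadratic action on $V$ in the commutator sense, $[v,g,g] = 1$, translates into $v(g-1)^2 = 0$. I expect the only real obstacle to be the invocation of Clifford's theorem together with this bookkeeping; the rest is routine.
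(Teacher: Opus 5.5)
Your proof is correct and follows essentially the same route as the paper: reduce to $L \normal G$, regard a $G$-chief factor $V$ as an irreducible $\gf{p}G$-module, write it as a sum of irreducible $L$-submodules (each an $L$-chief factor of $A$), and conclude that $(g-1)^2$ kills $V$. The only difference is that the paper does not invoke Clifford's complete reducibility. It notes that the $G$-translates $Ux$ of a single irreducible $L$-submodule $U$ are again irreducible $L$-submodules whose sum is $V$, and since $(g-1)^2$ is linear, a plain sum suffices without a direct-sum decomposition.
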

\begin{proof}
    By induction we may suppose that $L \normal G$.
    Let $V$ be a $G$-chief factor of $A$.
    Regard $V$ as an irreducible $\gf{p}G$-module.
    Let $U$ be an irreducible $\gf{p}L$-submodule of $V$.
    Then $U$ is an $L$-chief factor of $A$.
    Since $L \normal G$ so is each $Ux$ for $x \in G$.
    By hypothesis, $g$ acts quadratically on each $Ux$
    and hence on their sum, which is $V$.
\end{proof}

\section{Transfer} \label{t}

Recall that if $W$ is a characteristic $p$-functor for the prime $p$
then to say that \emph{$W$ controls $p$-transfer in $G$}
means that for $S \in \syl{p}{G}$, \[
    \begin{minipage}[t]{0.8\textwidth} \it
        the largest abelian $p$-quotient of $G$ is
        isomorphic to the largest abelian $p$-quotient
        of $\nn{G}{W(S)}$.
    \end{minipage}
\]

\noindent A $p$-quotient is a quotient that is a $p$-group.

The proof of the following result is highly nontrivial.
An alternative proof may be found in \cite{Suz}

\begin{Theorem}[Glauberman,Scott {\cite[7.4,p.21]{G2}}] \label{t.1}
    Let $p$ be a prime and $W$ be a characteristic $p$-functor on the group $G$.
    Assume that $W$ satisfies the following condition: \[
        \begin{minipage}[t]{0.85\textwidth}
            whenever $G\Star$ is a section of $G$ with characteristic $p$,
            $S\Star \in \syl{p}{G\Star}$ and $W(S\Star) \notnormal G\Star$
            then there exists $g \in S\Star \setminus \oo{p}{G\Star}$ such that
            $[V,g;p-1] = 1$ for every $G\Star$-chief factor $V$ of $\oo{p}{G\Star}$.
        \end{minipage}
    \]
    Then $W$ controls $p$-transfer in $G$.
\end{Theorem}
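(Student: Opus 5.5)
The plan is a minimal-counterexample argument. We reduce to a section of characteristic $p$, and there the element supplied by the hypothesis yields a contradiction through a Yoshida-type transfer computation. Choose $G$ of minimal order for which the condition holds but $W$ fails to control $p$-transfer. Note that the condition passes to every section of $G$, since a section of a section is a section. Fix $S \in \syl{p}{G}$ and put $N = \nn{G}{W(S)}$.

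By the focal subgroup theorem, controlling $p$-transfer amounts to the equality $S \cap G' = S \cap N'$, and the inclusion $\supseteq$ is trivial. The reduction steps are as follows, in this order.

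\begin{itemize}
\item[(i)] By minimality, $W$ controls $p$-transfer in every proper subgroup of $G$ containing $S$. Combining this with the Alperin--Goldschmidt description of the focal subgroup, we may assume that the generators of $S \cap G'$ not already in $N'$ arise from a single $p$-local subgroup $H = \nn{G}{Q}$ with $Q \leq S$.
\item[(ii)] If $H < G$, induction gives control inside $H$. This moves the problem to $\nn{H}{W(S)}$. A Frattini/Grün-type argument with $\nn{G}{S} \leq N$ then shows the generator already lies in $N'$. Hence $G = H$, so $\oo{p}{G} \neq 1$.
\item[(iii)] Pass to $G\Star = G/\oo{p'}{G}$. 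Then replace $G\Star$ by $\oo{p}{G\Star}$ together with the part of $\cc{G\Star}{\oo{p}{G\Star}}$ that can be factored off without changing the abelian $p$-quotients. After this, $G\Star$ may be assumed to have characteristic $p$ and $W(S\Star) \notnormal G\Star$, since otherwise $N\Star = G\Star$ and there is nothing to prove.
\end{itemize}

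The hypothesis now supplies $g \in S\Star \setminus \oo{p}{G\Star}$ with $[V,g;p-1]=1$ on every $G\Star$-chief factor $V$ of $\oo{p}{G\Star}$. The central step is to show that such a $g$ is invisible to the transfer into $S\Star/(S\Star)'$ modulo the contribution of $N\Star$. Concretely, we would show that the transfer $G\Star \to S\Star/(S\Star)'$ factors through $G\Star/\oo{p}{G\Star}$ on the relevant cosets. The condition $[V,g;p-1]=1$ says that $g$ acts on each chief factor with minimal polynomial dividing $(x-1)^{p-1}$. So $\listgen{g}$ never acts on any $V$ as the regular module of a cyclic group of order $p$. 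This is precisely the obstruction in Yoshida's theorem, where a $\mathbb{Z}_p \wr \mathbb{Z}_p$ quotient is forbidden.

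Following Yoshida, we would evaluate the transfer via the character-theoretic formula on the permutation module $\oo{p}{G\Star}$-cosets. The aim is to show that every linear $p$-power-order character of $S\Star$ trivial on $S\Star \cap N\Star{}'$ extends to $G\Star$. This contradicts minimality, since induction applied to the smaller group $\oo{p}{G\Star}\,\nn{G\Star}{W(S\Star)}$ handles the rest.

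The main obstacle is this last transfer computation: turning the module-theoretic condition $[V,g;p-1]=1$ on all chief factors into the vanishing of the transfer of $g$. I would first attempt it by induction along a $G\Star$-chief series of $\oo{p}{G\Star}$, one factor at a time. On each factor, the key input is the elementary fact that $\sum_{i=0}^{p-1} x^{i}$ acts as $(x-1)^{p-1}$ in characteristic $p$. This gives $1+g+\cdots+g^{p-1}$ acting as $0$ on $V$, which is exactly what kills the transfer term. The delicate points are the extension problems between chief factors and handling non-abelian $\oo{p}{G\Star}$. For the latter I would pass to a critical subgroup via Theorem~\ref{sr.1} and use Lemma~\ref{sr.2} to keep $g \notin \oo{p}{G\Star}$ under control. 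This is where I expect the bulk of the genuinely hard work, as in \cite{G2} and \cite{Suz}.
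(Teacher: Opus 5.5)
The paper does not prove this statement: it quotes it from Glauberman--Scott \cite[7.4]{G2}, with an alternative proof in \cite{Suz}, and calls it highly nontrivial. So your sketch has to stand on its own, and it does not.

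The outer shell you describe is the standard one: a minimal counterexample, the reformulation $S\cap G' = S\cap N'$, and an Alperin--Goldschmidt reduction to a $p$-local subgroup of characteristic $p$. However, steps (ii) and (iii) are misstated.
\begin{itemize}
\item[(ii)] In general $H=N_G(Q)$ does not contain $S$. Induction therefore gives control by $N_H(W(T))$ with $T=N_S(Q)\in\mathrm{Syl}_p(H)$, not by $N_H(W(S))$. To reach $N_G(W(S))$ you need the usual descending induction on $|T|$ through $N_G(W(T))$, using $N_S(T)>T$. A Frattini/Gr\"un argument with $N_G(S)\leq N$ does not do this.
\item[(iii)] $G/O_{p'}(G)$ need not have characteristic $p$, and ``factoring off part of $C_{G^*}(O_p(G^*))$'' is not a legitimate reduction. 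Characteristic $p$ comes from taking $Q$ in Goldschmidt's family with $C_S(Q)\leq Q$, so that $C_G(Q)=Z(Q)\times O_{p'}(C_G(Q))$.
\end{itemize}
These points are repairable. But once $O_p(G)\neq 1$ the local analysis is exhausted, since $G$ is itself the relevant $p$-local. The whole content of the theorem therefore lies in the step you leave open.

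That step is a genuine gap, and the mechanism you propose would fail. You want to turn $[V,g;p-1]=1$ into the vanishing of the transfer of $g$, via $1+g+\cdots+g^{p-1}=(g-1)^{p-1}$. The transfer of $g$ need not vanish. Take $G=\mathrm{Qd}(3)$ with $p=3$, $V=O_3(G)$ and $g$ a transvection. Then $[V,g,g]=1$ and $g\notin O_3(G)$. Yet $G'O^3(G)=VQ_8$ has index $3$ and does not contain $g$, so the transfer $G\to S/S'$ is nontrivial on $g$.

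The norm identity only says that $V$ restricted to $\langle g\rangle$ has no regular summand. It says nothing about the transfer $G^*\to S^*/(S^*)'$, which involves all of $S^*$ and its double cosets in $G^*$. Yoshida's theorem does not rescue this: its hypothesis (no $\mathbb{Z}_p\wr\mathbb{Z}_p$ quotient) is a condition on all of $S^*$, not on one element.

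The rest of the central step does not help either:
\begin{itemize}
\item Your closing step is empty. ``Every linear character of $S^*$ trivial on $S^*\cap (N^*)'$ extends to $G^*$'' is just the conclusion restated.
\item $O_p(G^*)N_{G^*}(W(S^*))=N^*$ because $O_p(G^*)\leq S^*\leq N^*$, so induction on it gives nothing.
\item Passing to a critical subgroup goes the wrong way, since your hypothesis already covers every chief factor of $O_p(G^*)$.
\end{itemize}

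What is missing is an actual argument that uses the single element $g$, together with minimality, to force $S^*\cap (G^*)'=S^*\cap (N^*)'$. That is exactly what \cite[7.4]{G2} supplies.
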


\noindent The following is useful in establishing the assumption in Theorem~\ref{t.1}.

\begin{Theorem}[Glauberman, {\cite[10.1,p.30]{G2}}] \label{t.2}
    Let $p$ be a prime and let the group $G$ act on the $p$-group $P$.
    Suppose that $A$ is a critical subgroup of $P$,
    $g \in G$ and $i,j > 0$.
    Assume that
    \begin{align*}
        [V,g;i] &= 1 \quad\text{for every $G$-chief factor $V$ of $A$ and} \\
        [V,g;j] &= 1 \quad\text{for every $G$-chief factor $V$ of $\zenter{A}$.}
    \end{align*}
    Then \[
        [V,g; i+j-1] = 1
    \]
    for every $G$-chief factor $V$ of $P$.
\end{Theorem}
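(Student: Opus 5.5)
We only need to treat $G$-chief factors $V$ of $P$ lying above $A$, since chief factors inside $A$ already satisfy $[V,g;i]=1$ and $i\le i+j-1$. We may pass to the semidirect product $PG$, in which $A$ is normal (being characteristic in $P$). The commutator identities then become $G$-equivariant. The plan is to embed the chief factors of $P/\zenter{A}$ into modules built from the chief factors of $A$ and of $\zenter{A}$, and then apply the polynomial identity below.

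The identity is the following. If $g$ acts on $\gf{p}$-modules $U$ and $W$ with $(g-1)^a=0$ on $U$ and $(g-1)^b=0$ on $W$, then $(g-1)^{a+b-1}=0$ on $\mathrm{Hom}(U,W)\cong U^{*}\otimes W$. Indeed, $g-1$ acts on the tensor product as $(g-1)\otimes g+1\otimes(g-1)$, and one expands binomially, using that $(g-1)\otimes g$ and $1\otimes(g-1)$ commute. The same bound passes to sections of $\mathrm{Hom}(U,W)$ and to direct sums of such modules. Hence by Jordan--Hölder, and by working along $G$-invariant filtrations, it suffices to produce a $G$-invariant series of $P$ above $\zenter{A}$ whose factors embed $G$-equivariantly into sums of modules $\mathrm{Hom}(U,W)$. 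Here $U$ should range over chief factors of $A/\zenter{A}$ and $W$ over chief factors of $\zenter{A}$.

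The first step uses Theorem~\ref{sr.1}. Let $Q=\cc{P}{\zenter{A}}$, which is $G$-invariant. For $x\in Q$ the map $a\mapsto[a,x]$ sends $A$ into $\zenter{A}$ by (b). It is a homomorphism that kills $\zenter{A}$, because $[A,Q]$ is central in $A$ and $Q$ centralizes $\zenter{A}$. Moreover, $x\mapsto[-,x]$ is a homomorphism $Q\to\mathrm{Hom}(A/\zenter{A},\zenter{A})$ with kernel $\cc{Q}{A}=\zenter{A}$, by (c), and this map is $G$-equivariant. Refining $A/\zenter{A}$ and $\zenter{A}$ by $G$-chief series and pulling back gives a $G$-invariant series of $Q/\zenter{A}$. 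Its factors embed in sums of modules $\mathrm{Hom}(U,W)$ of the required kind, so every $G$-chief factor of $Q/\zenter{A}$ is killed by $(g-1)^{i+j-1}$.

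The main obstacle is $P/Q$, which acts faithfully on $\zenter{A}$. The naive embedding of its chief factors into modules $\mathrm{Hom}(W,W')$, with $W,W'$ chief factors of $\zenter{A}$, only gives exponent $2j-1$, which is too weak when $j>i$. To handle this I would try to transfer the action back to $A$: an element $x\in P$ acts trivially on $A/\zenter{A}$ by (b). So I would consider the induced action of $x$ on the section $[A,x]\zenter{A}$ and use the Three Subgroups Lemma with $[A,A]\le\zenter{A}$. The goal is to express $[\zenter{A},x]$ through commutators $[A,\cdot]$, and so to realize the chief factors of $P/Q$ inside sections of modules $\mathrm{Hom}(U,W)$ with $U$ from $A/\zenter{A}$, rather than from $\zenter{A}$. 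Failing this, I would try a simultaneous filtration, namely the stability filtration of $P$ relative to a $G$-chief series of $A$ passing through $\zenter{A}$, and count degrees carefully. I expect this step to carry the real content of Glauberman's argument.
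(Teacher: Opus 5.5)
The paper does not prove this result; it quotes it from Glauberman, so your argument has to stand on its own. Two parts of it are correct: the treatment of chief factors inside $A$, and the treatment of $Q/\zenter{A}$ with $Q=\cc{P}{\zenter{A}}$ via $x\mapsto[-,x]$ together with the binomial estimate on $U^{*}\otimes W$. For the second, take the chief series to be chief series of the semidirect product $PG$, so that the pulled-back subgroups are normal in $P$.

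\textbf{The gap.} The proof is incomplete, because for $P/Q$ you only list strategies. The reason you give for needing them is also mistaken. Your reduction insists that the source module $U$ range over chief factors of $A/\zenter{A}$, and that restriction is unnecessary. The group $\zenter{A}$ is $G$-invariant, normal in $P$ and contained in $A$, so a $G$-chief series of $A$ can be chosen through $\zenter{A}$. By Jordan--H\"older, every $G$-chief factor of $\zenter{A}$ is a $G$-chief factor of $A$. It is therefore killed by $(g-1)^{i}$ as well as by $(g-1)^{j}$. In your ``naive'' embedding of the factors of $P/Q$ into modules $\mathrm{Hom}(W,W')$, use exponent $i$ on the source $W$ and $j$ on the target $W'$. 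Your own identity then gives $(g-1)^{i+j-1}=0$, not $2j-1$. So there is no difficulty when $j>i$, and no Three Subgroups argument is needed.

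\textbf{Completing the argument.} This observation handles all of $P/\zenter{A}$ at once; it is essentially your second fallback carried out.
\begin{itemize}
\item Take a $PG$-chief series $1=A_0<\dots<A_r=\zenter{A}<\dots<A_s=A$. Put $P_d=\{x\in P : [A_k,x]\le A_{k-d}\ \text{for all } k\}$, with $A_m=1$ for $m\le 0$.
\item Each $P_d$ is a $G$-invariant normal subgroup of $P$. Moreover $P_1=P$ and $P_s=\cc{P}{A}=\zenter{A}$.
\item The map $x\mapsto\bigl(aA_{k-1}\mapsto[a,x]A_{k-d-1}\bigr)_k$ embeds $P_d/P_{d+1}$ $G$-equivariantly into $\bigoplus_k\mathrm{Hom}(A_k/A_{k-1},A_{k-d}/A_{k-d-1})$. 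It is additive in $a$ and in $x$ because $P$ acts trivially on $PG$-chief factors.
\item Since $[A,P]\le\zenter{A}$, every summand whose target lies above $\zenter{A}$ receives zero.
\item Hence each relevant summand has a chief factor of $A$ as source, with exponent $i$, and a chief factor of $\zenter{A}$ as target, with exponent $j$. So $(g-1)^{i+j-1}$ kills it.
\end{itemize}
The chief factors inside $\zenter{A}$ are killed by $(g-1)^{j}$, and $j\le i+j-1$. Jordan--H\"older then gives the theorem.
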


\section{The proofs} \label{tp}

In this section we assume the following:
\begin{itemize}
    \item   $p$ is a prime,

    \item   $G$ is a group with characteristic $p$,

    \item   $S$ is a $p$-subgroup of $G$ with $\oo{p}{G} \leq S$ and

    \item   $A$ is a critical subgroup of $\oo{p}{G}$.
\end{itemize}

\begin{Lemma} \label{tp.1}
    Let $Z$ be an abelian normal subgroup of $S$,
    $i > 0$ and $B \in \calK_{i}(S)$.
    Then \[
        [Z,B,B] = 1.
    \]
\end{Lemma}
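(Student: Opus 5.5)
The plan is to apply the no-quadratic-action condition (c) in the definition of $\calK(S,\Sigma)$ to $B$, with $Z$ itself as the acting subgroup. The point is that $Z$ and $B$ normalize each other, so $[B,Z]$ lies in the abelian group $Z$. Hence $Z$ acts quadratically on $B$, and condition (c) then forces $[B,Z]$ into $\zenter{B}$.

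\emph{Step 1: $Z$ is a legitimate acting subgroup.} Since $i>0$, we have $B \in \calK_{i}(S) = \calK(S,\calK_{i-1}(S))$. We need $Z \in \calK_{i-1}(S)$. If $i-1=0$ this holds because $\calK_{0}(S)=\sgp{S}$. Otherwise it holds by the argument in the proof of Theorem~A: an abelian normal subgroup $Z$ of $S$ has $\zenter{Z}=Z$ and is normalized by every subgroup of $S$, so $Z$ lies in every $\calK_{j}(S)$. In particular $Z \leq\in \calK_{i-1}(S)$. By condition (b) for $B$, the subgroup $Z$ normalizes $B$. Also $B$ normalizes $Z$, because $Z \normal S$.

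\emph{Step 2: quadratic action.} By mutual normalization, $[B,Z] \leq B \cap Z$. Because $Z$ is abelian,
\[
    [B,Z,Z] \leq [Z,Z] = 1 \leq \zenter{B}.
\]
Condition (c) for $B \in \calK(S,\calK_{i-1}(S))$, applied with the subgroup $Z \leq\in \calK_{i-1}(S)$, now gives $[B,Z] \leq \zenter{B}$.

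\emph{Step 3: conclusion.} Since $[Z,B] = [B,Z] \leq \zenter{B}$, we get
\[
    [Z,B,B] \leq [\zenter{B},B] = 1,
\]
which is the claim.

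I do not expect a real obstacle. The only point needing care is confirming that $Z$ is admissible in condition (c), that is $Z \leq\in \calK_{i-1}(S)$, including the boundary case $i=1$. This is exactly why the hypothesis $i>0$ is needed.
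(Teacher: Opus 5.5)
Your proof is correct and follows the paper's argument: you place $Z$ in $\calK_{i-1}(S)$, observe that $[B,Z,Z] \leq [Z,Z] = 1$, apply condition (c) for $B \in \calK(S,\calK_{i-1}(S))$ to get $[B,Z] \leq \zenter{B}$, and conclude that $[Z,B,B] = 1$. Your explicit handling of the case $i=1$ and of why $[B,Z] \leq Z$ only spells out details that the paper leaves implicit.
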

\begin{proof}
    Since $Z = \zenter{Z}$,
    the definition of $\calK(-,-)$ implies that $Z \in \calK_{i-1}(S)$.
    Now \[
        [B,Z,Z] \leq [Z,Z] = 1
    \]
    and $B \in \calK_{i}(S) = \calK(S,\calK_{i-1}(S))$.
    Part (c) of the definition of $\calK(-,-)$ forces \[
        [B,Z] \leq \zenter{B}.
    \]
    Of course, $[B,Z] = [Z,B]$ so the conclusion follows.
\end{proof}

\begin{Lemma} \label{tp.2}
    At least one of the following holds.
    \begin{itemize}
        \item[(a)]  $A \in \calK_{3}(S)$.

        \item[(b)]  There exists $g \in S \setminus \oo{p}{G}$ such that \[
                        [A,g,g] \leq \zenter{A} \qtext{and} [\zenter{A},g,g] = 1.
                    \]
                    In particular $[V,g,g] = 1$
                    for every $G$-chief factor $V$ of $A$.
    \end{itemize}
\end{Lemma}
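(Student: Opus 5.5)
We assume (a) fails and produce $g$ as in (b). Since $A$ is critical in $\oo{p}{G}$, it is characteristic in $\oo{p}{G} \normal G$, so $A \normal G$ and in particular $A$ is normalized by $S$. Also $A$ has class at most two by Theorem~\ref{sr.1}(a). Recall $\calK_{3}(S) = \calK(S,\calK_{2}(S))$. Conditions (a) and (b) in the definition of $\calK(S,\calK_{2}(S))$ therefore hold for $A$, because every member of $\calK_{2}(S)$ is a subgroup of $S$ and so normalizes $A$. Hence if $A \notin \calK_{3}(S)$, condition (c) fails. That is, there exists $B \leq\in \calK_{2}(S)$ with
\[
    [A,B,B] \leq \zenter{A} \qtext{but} [A,B] \not\leq \zenter{A}.
\]

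We next locate $g$ inside $B$. First we show $B \not\leq \oo{p}{G}$. By Theorem~\ref{sr.1}(b), applied with $P = \oo{p}{G}$, we have $[A,\oo{p}{G}] \leq \zenter{A}$. So if $B \leq \oo{p}{G}$ then $[A,B] \leq \zenter{A}$, a contradiction. Consider any $b \in B$. Since $[A,B,B] \leq \zenter{A}$, we get $[A,b,b] \leq \zenter{A}$, using the commutator convention $[A,b,b] = [[A,b],b]$, where $[A,b] \leq [A,B]$. For the centre, $\zenter{A}$ is characteristic in $A$, hence normal in $S$, and it is abelian. Also $B$ lies in a member $C$ of $\calK_{2}(S)$. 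By Lemma~\ref{tp.1} with $Z = \zenter{A}$ and $i = 2$, we get $[\zenter{A},C,C] = 1$, so $[\zenter{A},b,b] = 1$. Thus any $b \in B \setminus \oo{p}{G}$ serves as $g$, and such $b$ exists because $B \not\leq \oo{p}{G}$. This $g$ lies in $S$, since $B \leq S$.

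For the final assertion, let $V = X/Y$ be a $G$-chief factor of $A$. Since $A$ has class two and $[A,g,g] \leq \zenter{A}$, we have $[X,g,g] \leq \zenter{A}$. We split into cases according to how $V$ sits relative to $\zenter{A}$. The chief factors of $A$ can be taken to refine the series $1 \leq \zenter{A} \leq A$, and by Jordan--Hölder the chief factors, as $G$-modules, are independent of the series chosen. For factors inside $\zenter{A}$, the condition $[\zenter{A},g,g] = 1$ gives quadratic action. For factors above $\zenter{A}$, the condition $[A,g,g] \leq \zenter{A}$ gives quadratic action on $A/\zenter{A}$, and hence on its sections. These factors are $\gf{p}G$-modules on which $g$ acts as a linear map $T$ with $(T-1)^2 = 0$, and this property passes to sections.

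The main subtlety is confirming that Lemma~\ref{tp.1} applies to the subgroup $B$ rather than only to members of $\calK_{2}(S)$. We handle this by passing to the containing member $C$, since $[\zenter{A},B,B] \leq [\zenter{A},C,C]$.
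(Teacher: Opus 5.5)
Your proof is correct and follows the paper's argument: negate condition (c) to obtain $B$, use $[A,\oo{p}{G}] \leq \zenter{A}$ to find $g \in B \setminus \oo{p}{G}$, apply Lemma~\ref{tp.1} to $\zenter{A}$, and refine $1 \leq \zenter{A} \leq A$ to a $G$-chief series. Your passage from $B \leq\in \calK_{2}(S)$ to a containing member $C$ is slightly more careful than the paper, which writes $B \in \calK_{2}(S)$ directly.
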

\begin{proof}
    Note that $A \normal S$ and $A$ has nilpotency class at most two.
    Suppose that $A \not\in \calK_{3}(S)$.
    Then by the definition of $\calK_{3}(S)$,
    there exists $B \in \calK_{2}(S)$ such that \[
        [A,B,B] \leq \zenter{A} \qtext{but} [A,B] \not\leq \zenter{A}.
    \]
    Now $A$ is a critical subgroup of $\oo{p}{G}$
    so $[A,\oo{p}{G}] \leq \zenter{A}$.
    Hence there exists $g \in B \setminus \oo{p}{G}$.
    Lemma~\ref{tp.1} implies that $[\zenter{A},g,g] = 1$.
    The final assertion of (b) follows on refining $1 \leq \zenter{A} \leq A$
    to a $G$-chief series for $A$.
\end{proof}

\begin{proof}[Proof of Theorem~B]
    In this case, $p \geq 5$.
    By Theorems~\ref{t.1} and \ref{t.2} it suffices to
    assume $\kinfty{S} \notnormal G$ and prove there exists
    $g \in S \setminus \oo{p}{G}$ with \[
        [A,g,g,g] \leq \zenter{A} \qtext{and} [\zenter{A},g,g] = 1.
    \]
    If $A \not\in\calK_{3}(S)$ then this follows from Lemma~\ref{tp.2}.
    Hence we assume that $A \in \calK_{3}(S)$.

    Now $\kinfty{S} \notnormal G$ so Lemma~\ref{cs.6}
    implies that $\calK^{\infty}(S) \nsubseteq \sgp{\oo{p}{G}}$.
    Choose $B \in \calK^{\infty}(S)$ with $B \nleq \oo{p}{G}$.
    By the definition of $\calK^{\infty}(S)$ we have $B \in \calK_{4}(S)$.

    Since $A \in \calK_{3}(S)$,
    the definition of $\calK_{4}(S)$ implies that $A \leq \nn{S}{B}$
    and $B$ has nilpotency class at most two.
    Then \[
        [A,B,B,B] \leq [B,B,B] = 1.
    \]
    Moreover $[\zenter{A},B,B] = 1$ by Lemma~\ref{tp.1}.
    Choose any $g \in B \setminus \oo{p}{G}$.
\end{proof}

\begin{Lemma} \label{tp.3}
    Suppose that $A \in \calK_{3}(S)$ and let $B \in \calK_{4}(S)$.
    Set $L = B\cc{G}{\zenter{A}}$.
    Then $B$ acts quadratically on every $L$-chief factor of $A$.
\end{Lemma}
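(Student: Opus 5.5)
The plan is to split the $L$-chief factors of $A$ according to the $L$-invariant series $1 \leq \zenter{A} \leq A$. Both terms are normal in $G$, hence $L$-invariant, so by Jordan--Hölder it suffices to treat $L$-chief factors inside $\zenter{A}$ and inside $A/\zenter{A}$ separately. For the first kind: $A$ is characteristic in $\oo{p}{G} \normal G$ and $\oo{p}{G} \leq S$, so $\zenter{A}$ is an abelian normal subgroup of $S$. Lemma~\ref{tp.1} then gives $[\zenter{A},B,B] = 1$, so $B$ acts quadratically on every $L$-chief factor below $\zenter{A}$.

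For the factors above $\zenter{A}$, I first record what membership gives. Since $A \in \calK_{3}(S)$ and $B \in \calK_{4}(S) = \calK(S,\calK_{3}(S))$, $A$ normalizes $B$ and $B$ has class at most two. Hence $[A,B] \leq B$ and $[A,B,B] \leq [B,B] \leq \zenter{B}$. Conversely, $B \in \calK_{4}(S) \subseteq \calK_{2}(S)$ by Lemma~\ref{cs.5}(a), so $B \leq\in \calK_{2}(S)$. Condition (c) for $A \in \calK(S,\calK_{2}(S))$ then says: for every subgroup $B_0 \leq B$, $[A,B_0,B_0] \leq \zenter{A}$ implies $[A,B_0] \leq \zenter{A}$.

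The goal for these factors is to show that $B$ centralizes $A/\zenter{A}$ modulo a part that is handled by the action of $C = \cc{G}{\zenter{A}}$. Let $\BAR{A} = A/\zenter{A}$ and let $V = X/Y$ be an $L$-chief factor with $\zenter{A} \leq Y < X \leq A$. I would set $D = [A,B,B]$, which lies in $A \cap \zenter{B}$.

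The key computation is to show $[D,A] \leq \zenter{A}$ and that $C$, which centralizes $\zenter{A}$, preserves the relevant commutator pairing $\BAR{A} \times \BAR{A} \to [A,A] \leq \zenter{A}$. This should let me prove that $\BAR{D}$ lies in an $L$-invariant subgroup on which condition (c) can be brought to bear. Concretely, I would apply condition (c) to well-chosen subgroups $B_0 \leq B$ (for instance $B_0 = B \cap \cc{S}{X/Y}$-type subgroups, or $B_0 = \listgen{b}$ for $b \in B$). The aim is to force $[X,B] \leq Y$ unless $[X,B,B] \leq Y$ already holds. A fallback is to use $[A,B,B,B] = 1$ together with the normality of $C$ in $L = BC$. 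With that, the fixed-point submodule $C_V(B)$ and the image $[V,B]$ are permuted by $C$, and irreducibility of $V$ under $L$ should pin down $[V,B,B] = 0$.

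I expect this step to be the main obstacle: converting the ``global'' quadratic-action condition (c), which concerns $A/\zenter{A}$ as a whole, into quadratic action on each individual $L$-chief factor of $A/\zenter{A}$. That is exactly where the role of $\cc{G}{\zenter{A}}$ in $L$ must be exploited, and it is the first place I would test the argument on small examples before committing to the details.
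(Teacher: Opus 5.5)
There is a genuine gap, exactly at the step you flagged. For the $L$-chief factors of $A/\zenter{A}$, which carry the whole content of the lemma, you have no argument. Your treatment of the factors inside $\zenter{A}$ via Lemma~\ref{tp.1} is fine. Neither route you sketch closes the gap.

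Condition~(c) for $A \in \calK(S,\calK_{2}(S))$ needs the input $[A,B_0,B_0] \leq \zenter{A}$ for some useful $B_0 \leq B$. All you actually know is $[A,B,B] \leq A \cap \zenter{B}$, which is a statement about the center of $B$, not of $A$. A choice like $B_0 = B \cap \cc{S}{X/Y}$ centralizes $X/Y$ by definition, so it says nothing about that factor.

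The fallback uses only $[A,B,B,B] = 1$ and $C \normal L = BC$, and these alone cannot force quadratic action. Take $L = C = \spl{2}{p}$ with $p$ odd, $B$ a Sylow $p$-subgroup, and $V$ the symmetric square of the natural module. Then $V$ is irreducible, and $B$ acts on it cubically but not quadratically. (Nor does $C$ permute $\cc{V}{B}$: an element $c$ carries it to $\cc{V}{B^{c}}$.)

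The missing idea is to work modulo $\zenter{A} \cap \zenter{B}$ rather than $\zenter{A}$. You should then use condition~(c) for $B \in \calK(S,\calK_{3}(S))$, applied to a subgroup of $A$, not condition~(c) for $A$. Your ``key computation'' $[D,A] \leq \zenter{A}$ is automatic, since $A$ has class at most two. The useful fact is $[D,A] \leq \zenter{A} \cap \zenter{B}$, which holds because $D \leq A \cap \zenter{B}$ and $A$ normalizes $B$, hence $\zenter{B}$.

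Let $\widetilde{A}$ be the preimage in $A$ of $\zenter{A/(\zenter{A} \cap \zenter{B})}$. It is normal in $L$, because $B$ normalizes $\zenter{A} \cap \zenter{B}$ and $\cc{G}{\zenter{A}}$ centralizes it. This is the only role of $\cc{G}{\zenter{A}}$. Then $D \leq \widetilde{A}$, so $B$ acts quadratically on $A/\widetilde{A}$.

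Moreover, $[B,\widetilde{A},\widetilde{A}] \leq [\widetilde{A},\widetilde{A}] \leq \zenter{A} \cap \zenter{B}$ and $\widetilde{A} \leq A \in \calK_{3}(S)$. So condition~(c) for $B$ gives $[B,\widetilde{A}] \leq \zenter{B}$, whence $[\widetilde{A},B,B] = 1$. Refining $1 \leq \widetilde{A} \leq A$ to an $L$-chief series finishes the proof. Since $\zenter{A} \leq \widetilde{A}$, your separate treatment of $\zenter{A}$ is subsumed.
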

\begin{proof}
    The definition of $\calK_{4}(S)$ implies that $A \leq \nn{S}{B}$.
    As $A \normal S$ we obtain
    \begin{equation} \tag{$1$}
        [A,B,B] \leq A \cap [B,B] \leq A \cap \zenter{B}
    \end{equation}
    and then
    \begin{equation} \tag{$2$}
        [A,B,B,A] \leq [A,A \cap \zenter{B}] \leq \zenter{A} \cap \zenter{B}.
    \end{equation}
    Let \[
        \widetilde{A} = \zenter{A \bmod \zenter{A} \cap \zenter{B}}.
    \]
    Note that $\widetilde{A} \normal L$ because $A \normal G$ and $\zenter{A} \cap \zenter{B} \normal L$.
    From $(2)$,
    \begin{equation} \tag{$3$}
        [A,B,B] \leq \widetilde{A}.
    \end{equation}
    Now $\widetilde{A} \leq\in \calK_{3}(S)$ and
    $[B, \widetilde{A}, \widetilde{A}] \leq [\widetilde{A},\widetilde{A}]
    \leq \zenter{A} \cap \zenter{B}$.
    Since $B \in \calK_{4}(S)$,
    this forces $[B,\widetilde{A}] \leq \zenter{B}$ whence
    \begin{equation} \tag{$4$}
        [\widetilde{A},B,B] = 1.
    \end{equation}
    As $\widetilde{A} \normal L$ we may refine the series $1 \leq \widetilde{A} \leq A$
    to an $L$-chief series for $A$ and
    the conclusion follows from $(3)$ and $(4)$.
\end{proof}

\begin{proof}[Proof of Theorem~C]
    We assume that conclusion (b) fails.
    Recall that
    $\oo{p}{G}$ acts trivially on each $G$-chief factor of $\oo{p}{G}$.
    Thus for all $G$-chief factors $V$ of $\oo{p}{G}$ and $g \in S$,
    \begin{equation} \tag{$*$}
        [V,g,g] = 1 \qtext{implies} [V,g] = 1.
    \end{equation}
    Note that each $G$-chief factor of $A$ or $\zenter{A}$ is also
    a $G$-chief factor of $\oo{p}{G}$.

    Suppose that $A \not\in \calK_{3}(S)$.
    Choose $g$ in accordance with Lemma~\ref{tp.2}.
    Then $(*)$ and Lemma~\ref{sr.2} force $g \in \oo{p}{G}$,
    a contradiction.
    Thus $A \in \calK_{3}(S)$.

    Let $B \in \calK^{\infty}(S)$.
    Then $B \in \calK_{4}(S)$.
    Lemma~\ref{tp.1} and $(*)$ imply that
    $B$ acts trivially on each $G$-chief factor of $\zenter{A}$.
    Then Lemma~\ref{sr.2} forces \[
        B \leq \oo{p}{G \bmod \cc{G}{\zenter{A}}}.
    \]
    Let $L = B\cc{G}{\zenter{A}}$.
    Then $L \subnormal G$.
    Lemmas~\ref{tp.3} and \ref{sr.3} imply that $B$ acts quadratically
    on every $G$-chief factor of $A$.
    Then $(*)$ and Lemma~\ref{sr.2} force $B \leq \oo{p}{G}$.
    We deduce that $\calK_{\infty}(S) \subseteq \sgp{\oo{p}{G}}$.
    Lemma~\ref{cs.6} yields $\kinfty{S} = \kinfty{\oo{p}{G}} \normal G$
    so conclusion (a) holds.
\end{proof}

Corollary~D follows immediately from Theorem~C and a
theorem of Glauberman \cite[Lemma~6.3]{G3} or \cite[Theorem~B]{F2}.

\bibliographystyle{amsalpha}

\end{document}